\newtheorem{theorem}{Theorem}
\newtheorem{lemma}[theorem]{Lemma}
\newtheorem{cor}[theorem]{Corollary}
\def\reals{{\mathbb R}}
\def\R{{\cal R}}
\def\eps{{\varepsilon}}
\begin{document}

\title{Sets with few distinct distances do not have heavy lines 
\thanks{%
Work on this paper by Orit E. Raz and Micha Sharir was supported by 
Grant 892/13 from the Israel Science Foundation. Work by Micha Sharir 
was also supported by Grant 2012/229 from the U.S.--Israel Binational 
Science Foundation, by the Israeli Centers of Research Excellence (I-CORE) 
program (Center No.~4/11), and by the Hermann Minkowski-MINERVA Center for 
Geometry at Tel Aviv University. Work by Oliver Roche-Newton was supported 
by the Austrian Science Fund (FWF), Project F5511-N26, which is part of the 
Special Research Program ``Quasi-Monte Carlo Methods: Theory and Applications''.
Part of this research was performed while the authors were visiting
the Institute for Pure and Applied Mathematics (IPAM), which is supported
by the National Science Foundation.}}

\author{
Orit E. Raz\thanks{%
School of Computer Science, Tel Aviv University,
Tel Aviv 69978, Israel.
{\sl oritraz@post.tau.ac.il} }
\and
Oliver Roche-Newton\thanks{%
Johann Radon Institute for Computational and Applied Mathematics,
Linz 4040, Austria.
{\sl o.rochenewton@gmail.com} }
\and
Micha Sharir\thanks{%
School of Computer Science, Tel Aviv University,
Tel Aviv 69978, Israel.
{\sl michas@post.tau.ac.il} }
}

\maketitle

\begin{abstract}
Let $P$ be a set of $n$ points in the plane that determines at most $n/5$ distinct distances.  
We show that no line can contain more than $O(n^{43/52}{\rm polylog}(n))$ points of $P$.
We also show a similar result for rectangular distances, equivalent to distances in the 
Minkowski plane, where the distance between
a pair of points is the area of the axis-parallel rectangle that they span.
\end{abstract}

\section*{The problem and its background}

Given a set $P$ of $n$ points in $\reals^2$, let $D(P)$ denote the number
of distinct distances that are determined by pairs of points from $P$, and put 
$D(n) = \min_{|P|=n}D(P)$; that is, $D(n)$ is the minimum number of distinct 
distances that any set of $n$ points in $\reals^2$ must always determine.
In his celebrated 1946 paper \cite{erd46}, Erd\H os derived the bound 
$D(n) = O(n/\sqrt{\log n})$ by considering a $\sqrt{n}\times \sqrt{n}$ integer lattice.
Recently, after 65 years and a series of progressively larger lower bounds\footnote{%
 For a comprehensive list of the previous bounds, see \cite{GIS11} and 
 \url{http://www.cs.umd.edu/~gasarch/erdos_dist/erdos_dist.html}.}, 
Guth and Katz \cite{GK11} provided an almost matching lower bound $D(n) = \Omega(n/\log n)$.

While the problem of finding the asymptotic value of $D(n)$ is almost completely solved, 
hardly anything is known about which point sets determine a small number of distinct distances. 
Consider a set $P$ of $n$ points in the plane, such that $D(P)=O(n/\sqrt{\log n})$. 
Erd\H os conjectured \cite{erd86} that any such set ``has lattice structure.'' 
Informally, this should mean that, on one hand, there have to exist (many) lines that contain 
many points of $P$, and, on the other hand, no line should contain too many points of $P$. 
Progress on the former aspect of the conjecture has been rather minimal:
The only significant result is a variant of an old proof of Szemer\'edi, 
which implies that there exists a line that contains 
$\Omega(\sqrt{\log n})$ points of $P$ (Szemer\'edi's proof was communicated by 
Erd\H os in \cite{erd75} and can be found in \cite[Theorem 13.7]{PA95}). 

In contrast, some recent works have advanced the latter aspect.
Specifically, a recent result of Pach and de Zeeuw \cite{PdZ13} implies that any constant-degree 
curve that contains no lines and circles cannot be incident to more than $O(n^{3/4})$ 
points of $P$ (see also Sharir et al.~\cite{SSS13} for a precursor of this work). 
Another recent result, by Sheffer, Zahl and de Zeeuw \cite{SZZ13}, 
implies that no line can contain $\Omega(n^{7/8})$ points of $P$, and no circle can 
contain $\Omega(n^{5/6})$ such points.

We note that all these papers do not make use of the specific bound for $D(P)$. What 
they show is that the existence of a curve, line, or circle that contains more than the 
prescribed number of points of $P$ implies that $D(P) = \Omega(n)$. This is also the approach 
used in this paper.

\section*{Our results}

In this paper we significantly improve the bound of Sheffer et al.~\cite{SZZ13} for 
points on a line, and establish the following result (we use the $O^*(\cdot)$ notation 
to hide polylogarithmic factors).


\begin{theorem}\label{main}
Let $P$ be a set of $n$ points in the plane, such that $D(P)\leq n/5$. 
Then, for any line $\ell$ in the plane, we have
$$
|P\cap\ell|=O^*\left(n^{43/52}\right) \approx O(n^{0.827}) .$$
\end{theorem}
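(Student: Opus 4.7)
The plan is to argue by contradiction: assume some line $\ell$ contains $k := |P \cap \ell|$ points of $P$ with $k$ larger than the claimed bound, and derive $D(P) > n/5$. Write $L := P \cap \ell$ and $Q := P \setminus \ell$, so $|L| = k$ and $|Q| = n - k \geq 4n/5$ (we may assume $k \leq n/5$, as otherwise there is nothing to prove). Coordinatise so that $\ell$ is the $x$-axis; then $L = \{(a_i, 0)\}$ and $Q = \{(x_j, y_j)\}$ with $y_j \neq 0$.

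From the hypothesis $D(P) \leq n/5$, Cauchy--Schwarz yields the equidistant-quadruple lower bound
$$E(P) := |\{(a,b,c,d) \in P^4 : |ab| = |cd|\}| \geq \frac{|P|^4}{D(P)} \geq 5n^3.$$
I would first split $E(P)$ according to the distribution of the four entries between $L$ and $Q$. Quadruples lying entirely in $L$ satisfy a one-dimensional equidistance and number $O(k^3)$; quadruples lying entirely in $Q$ can be handled with the Pach--de Zeeuw bound mentioned in the introduction. Consequently, the ``mixed'' quadruples must contribute $\Omega(n^3)$.

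The main step is to bound the mixed contribution from above by a curve-incidence estimate. Using $|pq|^2 = (a - x)^2 + y^2$ for $p = (a, 0) \in L$ and $q = (x, y) \in Q$, the constraint $|pq| = |p'q'|$ on a mixed quadruple $p, p' \in L$, $q, q' \in Q$ becomes a single polynomial equation in $(a, a', x, y, x', y')$. For each pair $(q, q') \in Q \times Q$ this cuts out an algebraic curve in the $(a, a')$-plane, so counting mixed quadruples becomes the problem of counting incidences between the $k^2$ points of the ``grid'' $L \times L$ and $|Q|^2$ low-degree curves. I would feed this through an incidence bound for points and bounded-degree curves, in the spirit of \cite{PdZ13,SZZ13}, possibly combined with an Elekes--Sharir-type rigid-motion reformulation to exploit symmetries of the equation.

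The main obstacle, and the source of the exponent $43/52$, is that a direct Szemer\'edi--Trotter application to the mixed count recovers only the earlier bound $O(n^{7/8})$ of \cite{SZZ13}. To push past it one must exploit simultaneously (i) the collinearity of $L$, which confines the curves to a family with strong algebraic structure and permits better-than-generic incidence estimates, and (ii) the additive structure of the squared-distance expressions $(a_i - x_j)^2 + y_j^2$, which enables second-moment refinements by Cauchy--Schwarz. Carefully disposing of degenerate configurations -- pairs in $Q$ symmetric about $\ell$, pairs in $L$ sharing a midpoint, and overlapping or reducible curves -- and optimising the resulting inequalities between the lower bound $5n^3$ and the curve-incidence upper bound will then force $k = O^*(n^{43/52})$, yielding the contradiction and completing the proof.
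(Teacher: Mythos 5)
Your high-level plan---double-count equidistant quadruples and reformulate the upper bound as a point/curve incidence problem in a parametric plane---matches the structure of the paper's argument, and you are right that the exponent comes from pushing past a naive Szemer\'edi--Trotter bound. But the proposal has a genuine gap in the lower-bound decomposition, and it leaves the two quantitative ingredients that actually produce $43/52$ unidentified.

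The gap: you apply Cauchy--Schwarz to the global energy to get $E(P)\geq n^4/D(P)\geq 5n^3$, then try to subtract off the quadruples with all four points in $L$ and all four points in $Q=P\setminus\ell$. The first subtraction is fine, but the second is not: quadruples $(a,b,c,d)\in(P\setminus\ell)^4$ with $|ab|=|cd|$ are not controlled by Pach--de Zeeuw (which bounds $|P\cap C|$ for a curve $C$, not an energy), and in a generic configuration they can number $\Theta(n^3)$, swallowing your entire lower bound. The paper sidesteps this by restricting from the outset to the set $Q:=\{(a,b,p,q)\in A^2\times P^2 \mid \|p-a\|=\|q-b\|\}$ (with $A=P\cap\ell$) and applying Cauchy--Schwarz to the \emph{bipartite} distance count $D(A,P)\leq D(P)\leq n/5$, yielding $|Q|\geq m^2n^2/D(A,P)\geq 5m^2n$ where $m=|A|$. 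Note the correct order of magnitude is $m^2n$, not $n^3$. The degenerate part that gets removed is the set of quadruples with $p_y^2=q_y^2$ (at most $4m^2n$ of them), not ``all four off $\ell$.''

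The missing ingredients on the upper-bound side are also essential. First, the curve family in the parametric plane $\Pi=A^2$ consists of \emph{hyperbolas} $\gamma_{p,q}$, and the paper invokes the Agarwal et al.\ incidence bound for pseudo-parabolas admitting a $3$-parameter representation (Theorem~\ref{lenses}), not a generic bounded-degree incidence theorem. Second, and more importantly, that incidence bound degrades when many hyperbolas coincide, so the crux is a multiplicity bound: coinciding hyperbolas $\gamma_{p,q}$ force many points of $P$ onto a common vertical line, and the convexity sum-set estimate $|U-U|^5|f(U)+V|^6=\Omega^*(|U|^{11}|V|^3)$ (Theorem~\ref{LR}, with $f(x)=x^2$, $U=A$, $V=B^2$) then yields $k=O^*(n^{11/3}/m^{11/3})$. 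This coincidence-multiplicity control is the heart of the improvement over $n^{7/8}$, and your proposal neither names the coincidence issue nor offers a mechanism to bound it; the remark about ``second-moment refinements'' does not substitute for the sum-product/convexity argument the proof actually relies on.
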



We also consider a version of this problem in which a different notion of distance is used. Given two points $p=(p_x,p_y)$ and $q=(q_x,q_y)$ in the plane,
the \emph{rectangular area} or \emph{rectangular distance} determined by $p$ and $q$ is the quantity
$$
R(p,q)=(p_x-q_x)(p_y-q_y).
$$
That is, $R(p,q)$ is the signed area of the axis parallel rectangle with $p$ and $q$ at opposite corners, where the area is positive (resp., negative)
if $p$ and $q$ are the northeast and southwest
(resp., northwest and southeast) corners of the rectangle. Let $\R(P):=|\{R(p,q)\mid p,q \in P\}|$. 
The interest in rectangular distances comes from the observation that, by rotating the coordinate frame by $45^\circ$, 
$R(p,q)$ becomes half the squared distance between $p$ and $q$ in the \emph{Minkowski plane}~\cite{Ben73},
namely, where the squared distance is $(p_x-q_x)^2-(p_y-q_y)^2$. Another motivation for rectangular distances is
that they arise in certain problems of the sum-product type; see Roche-Newton and Rudnev~\cite{RR12}.

Following in the footsteps of the work of Guth and Katz \cite{GK11}, Roche-Newton and Rudnev~\cite{RR12}
have shown that, for a set $P$ of $n$ points in the plane, $\R(P)=\Omega(n/ \log n)$, provided that $P$ 
is not contained inside a single horizontal or vertical line. Note that the latter condition is necessary, since if
all the points lie on such a line then all pairs determine rectangular area equal to zero. Indeed, horizontal and vertical lines are somewhat special 
in this ``metric".

In this context, we can observe a subtle difference between these two notions of distance, since the analogue of Theorem \ref{main} is not quite true if
Euclidean distance is replaced with rectangular area. Let $\eps>0$ be fixed, and suppose for simplicity that 
$n^{1-\eps}$ and $n^{\eps}$ are integers. An unbalanced rectangular lattice 
$\{1,\ldots,n^{1-\eps}\} \times \{1,\ldots,n^{\eps}\}$, consisting of $n$ points, will determine a sublinear number of distinct rectangular
areas. Indeed, the size of the set of rectangular areas is approximately the same as that of the product set $\{1,\ldots,n^{1-\eps}\}\cdot \{1,\ldots,n^{\eps}\}$. 
The fact that
the size of this set is $o(n)$ is a classical result in number theory and precise estimates for its cardinality can be found in Ford \cite{F08}. On the
other hand, this lattice point set contains rich horizontal lines with $n^{1-\eps}$ points thereon. A symmetric construction yields a point set which
determines a sublinear number of rectangular areas, but for which there exist vertical lines which support $n^{1-\eps}$ points. By contrast, it was
established in \cite[Theorem 2.1]{CSS13} that a rectangular lattice needs to be only very slightly unbalanced in order to determine $\Omega(n)$ 
distinct (Euclidean) distances.

However, we show that these are the only problematic directions. Specifically, we prove the following result.

\begin{theorem}\label{main2}
Let $P$ be a set of $n$ points in the plane, such that $\R(P)\leq n/5$. 
Then, for any line $\ell$ which is not horizontal or verical, we have
$$
|P\cap\ell|=O^*\left(n^{43/52}\right) .
$$
\end{theorem}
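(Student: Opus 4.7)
The plan is to follow the proof of Theorem~\ref{main} essentially step by step, replacing the role of the group of Euclidean isometries by the 3-parameter group $G$ of symmetries of the rectangular distance. This group is generated by the translations $(x,y)\mapsto(x+a,y+b)$ together with the hyperbolic rotations $(x,y)\mapsto(\lambda x,\lambda^{-1}y)$ for $\lambda\in\reals\setminus\{0\}$ (modulo the discrete swap $(x,y)\mapsto(y,x)$ and the sign change $(x,y)\mapsto(-x,-y)$). Equivalently, after the $45^\circ$ rotation mentioned above, $G$ becomes the group of proper Minkowski isometries, and the entire setup transports into the Minkowski plane in parallel with the Euclidean setup used for Theorem~\ref{main}.

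The incidence framework is the Elekes--Sharir--Guth--Katz correspondence. Let $\ell$ be a non-horizontal, non-vertical line and write $m:=|P\cap\ell|$. Applying Cauchy--Schwarz to the multiplicities of rectangular distances, together with the hypothesis $\R(P)\le n/5$, yields $\Omega(m^2n)$ ordered quadruples $(p,q,p',q')$ with $p,p'\in P\cap\ell$ and $R(p,q)=R(p',q')\neq 0$. Each such quadruple singles out a curve in $G$ --- the $1$-parameter family of group elements that sends $(p,q)$ to some pair having the same rectangular distance, in particular to $(p',q')$ --- and the quadruple count translates into an incidence count between these curves and distinguished points in the $3$-dimensional parameter space of $G$. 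Running the same algebraic and incidence-geometric bound used in the proof of Theorem~\ref{main} in this parameter space produces the upper bound $m=O^*(n^{43/52})$.

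The main obstacle is the non-degeneracy step: one must verify that the family of curves arising from a non-horizontal, non-vertical $\ell$ is not trapped in a plane or a regulus inside the parameter space of $G$, as such a situation would weaken the incidence bound. The horizontal/vertical exclusion is precisely what prevents this: horizontal and vertical lines are setwise preserved by every hyperbolic rotation, so if $\ell$ had either of these directions the associated family of group elements would collapse into a lower-dimensional subvariety, and the argument would degenerate. The unbalanced rectangular lattice described above shows that this exclusion is not cosmetic; without it, the conclusion of Theorem~\ref{main2} is genuinely false.
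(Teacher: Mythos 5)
Your proposal rests on a misreading of the proof of Theorem~\ref{main}. That proof does \emph{not} work in the three-dimensional parameter space of a symmetry group à la Elekes--Sharir--Guth--Katz. It is a planar incidence argument: one defines, for each pair $(p,q)\in P^2$, a hyperbola $\gamma_{p,q}\subset\reals^2$ in a ``dual'' plane whose points are $A\times A$, and then the quadruple count $|Q^{(2)}|$ is exactly $I(\Pi,\Gamma)$ for $\Pi=A^2$ and $\Gamma=\{\gamma_{p,q}\}$. The incidence bound used is Theorem~\ref{lenses} (Agarwal et al.) for pseudo-parabolas in the \emph{plane}, adapted to curves with multiplicity; it is not Guth--Katz's bound for lines or curves in $\reals^3$, and there are no planes or reguli to avoid. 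The restriction of $p,p'$ to the line $\ell$ is exactly what collapses the problem from three dimensions to two, and this collapse is what makes the quantitative exponent $43/52$ attainable. By proposing to redo the argument inside the group $G$ of Minkowski isometries, you are describing a different framework (closer to Roche-Newton--Rudnev~\cite{RR12}) whose incidence bounds are incomparable, and you give no computation showing that it would reproduce the $43/52$ exponent.

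The other, equally serious, omission is the multiplicity step. In both theorems the hyperbolas $\gamma_{p,q}$ can coincide, and controlling the maximum multiplicity $k$ is the crux. The paper bounds $k=O^*(n^{11/3}/m^{11/3})$ by observing that coinciding curves force the corresponding $q$-points onto a common line $\ell_2$ of slope $-\kappa$, and then applying the unbalanced convexity-sumset inequality (Theorem~\ref{LR}) with the convex function $f(x)=\kappa x^2-cx$. Your proposal never mentions this step, and it is not a routine transfer: the convex function must be chosen to match the specific form of $\R(A,B\cap\ell_2)$, and the whole chain $\R(P)^{11}\ge\R(A)^5\R(A,B)^6=\Omega^*(|A_0|^{11}|B_0|^3)$ has to be reproduced. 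Without it you have no bound on $k$, and hence no bound from the incidence inequality. Finally, while your heuristic about horizontal/vertical lines being special under hyperbolic rotations is morally in the right direction, the actual use of the hypothesis in the paper is much more concrete: the line $\ell$ must have equation $y=\kappa x$ with $\kappa\ne 0$, so that the equation \eqref{Spqeq} defining $\gamma_{p,q}$ (which involves dividing by $\kappa$) produces a genuine non-degenerate hyperbola. You should make that use of the hypothesis explicit rather than appealing to invariance of the symmetry group.
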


\section*{Preliminary results}

The proofs of Theorems \ref{main} and \ref{main2} closely follow the structure of the main result in Sheffer et al.~\cite{SZZ13}. 
The quantitative improvements obtained here come as a result of calling upon two results which are quantitatively better than those 
used in \cite{SZZ13}. The first of these is the following incidence theorem.

\begin{theorem}[{\bf Agarwal et al., Theorem 6.6~\cite{lenses}}]\label{lenses}
Let $C$ be a family of distinct pseudo-parabolas that admit a $3$-parameter representation, 
and let $P$ be a set of distinct points in the plane. 
Denote by $I(P,C)$ the number of incidences between $P$ and $C$. Then
$$
I(P,C)=O(|P|^{2/3}|C|^{2/3}+|P|^{6/11}|C|^{9/11}\log^{2/11}|C|+|P|+|C|).
$$
\end{theorem}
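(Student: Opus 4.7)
My plan is to follow the lens-cutting framework of Sharir and collaborators for arrangements of curves in which any two members meet in at most two points. At the heart of the argument is the $3$-parameter representation, which guarantees that any three points lie on at most a constant number of curves of $C$; this combinatorial restriction, together with the pseudo-parabola property, lets one reduce the incidence question to the corresponding one for pseudo-segments, for which a Szemer\'edi--Trotter-type bound is available via the crossing-number method.

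First, I would analyze the arrangement $\mathcal{A}(C)$ and count its \emph{lenses}, namely the bounded faces of $\mathcal{A}(C)$ whose boundary is formed by exactly two arcs coming from a pair of curves that cross twice. Using the $3$-parameter property, a Clarkson--Shor random-sampling argument combined with a charging scheme (exploiting the fact that three incidences determine a curve of $C$, so certain configurations are forbidden) yields a bound of the form $O^*(|C|^{4/3})$ on the number of non-overlapping empty lenses in $\mathcal{A}(C)$. This lens count controls how many cuts are needed to subdivide the curves of $C$ into a family $\Xi$ of pseudo-segments, i.e., arcs such that any two meet in at most one point; one cut per lens suffices.

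Second, I would apply the Sz\'ekely-style incidence bound for points and pseudo-segments,
$$
I(P,\Xi) = O\!\left(|P|^{2/3}|\Xi|^{2/3} + |P| + |\Xi|\right),
$$
which follows from the crossing-number inequality applied to the natural multigraph whose edges are the sub-arcs of $\Xi$ connecting consecutive points of $P$ along each pseudo-segment. Then I would combine this with the direct crossing-number bound applied to the original family $C$ (using only the two-intersection property), which by itself yields the term $|P|^{2/3}|C|^{2/3}$. The mixed term $|P|^{6/11}|C|^{9/11}\log^{2/11}|C|$ emerges after substituting the lens bound into the pseudo-segment estimate and performing a standard dyadic decomposition over a threshold $k$ that separates \emph{heavy} points (incident to many curves of $C$) from \emph{light} ones, then optimizing the threshold to balance the resulting two contributions; the additive $|P|+|C|$ terms cover the sparse regimes.

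The main obstacle is the lens-counting step: converting the abstract $3$-parameter condition into an explicit $O^*(|C|^{4/3})$ bound on non-overlapping lenses requires a careful probabilistic argument together with a forbidden-subgraph/charging analysis, and this is the technical heart of the entire theorem. The subsequent reductions — cutting the curves at selected lens vertices to obtain a family of pseudo-segments, and then invoking the crossing-number bound — are by now routine, but getting the logarithmic factor exactly as $\log^{2/11}|C|$ requires some bookkeeping in the dyadic decomposition.
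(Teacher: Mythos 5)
The statement in question is not proved in the paper: it is quoted from Agarwal, Nevo, Pach, Pinchasi, Sharir and Smorodinsky~\cite{lenses}, Theorem~6.6, with the polylogarithmic factor tightened to $\log^{2/11}|C|$ by plugging in the improved cut bound of Marcus and Tardos~\cite{MT}, as the authors note in the paragraph immediately following the statement. So there is no ``paper's own proof'' to compare against; the theorem is used as a black box.

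As a blind reconstruction of the argument in~\cite{lenses}, your sketch goes in the right general direction (bound lenses, cut the curves into pseudo-segments, apply a Sz\'ekely-type bound inside a cutting), but two specific claims need correction. First, ``one cut per non-overlapping empty lens'' does not produce a family of pseudo-segments: cutting one lens can create new lenses involving the resulting arcs, and the quantity that actually governs the conversion is the \emph{cut number}, which is larger than the number of pairwise non-overlapping empty lenses. It is precisely the polylogarithmic factor in that cut number, for pseudo-parabolas with a $3$-parameter representation, that Marcus and Tardos improved, and this is what propagates to the $\log^{2/11}|C|$ in the incidence bound. Second, the exponents $6/11$ and $9/11$ do not emerge from a dyadic heavy/light split of the points with a threshold optimization; they come from building a $(1/r)$-cutting of the arrangement, invoking the cut bound within each cell, summing the per-cell Sz\'ekely-type estimates, and optimizing over $r$, in the spirit of Aronov and Sharir's treatment of point--circle incidences. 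So you have correctly located the technical heart of the theorem (a cut bound exploiting the $3$-parameter structure), but you misstate both the relationship between lenses and cuts and the mechanism by which the mixed term is obtained.
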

(The bound in \cite{lenses} is slightly weaker; the improvement, manifested in the factor $\log^{2/11}n$, 
which replaces a slightly larger factor in \cite{lenses}, is due to Marcus and Tardos~\cite{MT}.)
In the language of \cite{lenses}, a family of \textit{psuedo-parabolas} is a family of graphs of everywhere defined continuous functions, so that each
pair intersect in at most two points. The collection $C$ \textit{admits a $3$-parameter representation} if the curves have three degrees of
freedom, and can thus be identified with points in $\reals^3$ in a suitable manner. 
A full definition of this property is given at the beginning of Section 5 in \cite{lenses}. 

The important observation for us is that a
family of hyperbolas of the form $(x-a)^2-(y-b)^2=c$, where $a,b$ and $c$ are real numbers with $c \neq 0$, is a family of pseudo-parabolas which admit a
$3$-parameter representation, and so the bound in Theorem \ref{lenses} applies to such a family.
(Technically, some transformations are needed to make this family have the desired properties. Specifically, we rotate the coordinate frame by $90^\circ$, 
and treat each branch of each hyperbola as a separate curve.) Such families of hyperbolas arise in the proofs of our theorems.

A crucial assumption in Theorem~\ref{lenses} is that the curves in $C$ are all \emph{distinct}.  
Suppose next that they are not necessarily distinct, but that the maximum coincidence multiplicity 
of any curve in $C$ is at most $k$. A standard argument,
similar to the one used by Sz\'ekely~\cite{Sz}, shows that in this case we have
\begin{equation} \label{incmult0}
I(P,C) = O\left( k^{1/3}|P|^{2/3}|C|^{2/3} + k^{2/11}|P|^{6/11}|C|^{9/11}\log^{2/11}|C| +
k|P| + |C| \right).
\end{equation}

One way of seeing this is to use a pruning argument, where we leave just one
curve out of any family of coinciding ones, assuming that all multiplicities 
are between $t$ and $t/2$ for some $t$. This leaves us with a subset $C'$ of 
$\Theta(|C|/t)$ curves, all distinct. Applying Theorem~\ref{lenses} to 
$P$ and $C'$, and multiplying the resulting bound by $t$, the asserted bound 
follows, with $t$ instead of $k$, and with $C$ standing for the subset of curves
whose multiplicity is roughly $t$.
To complete the analysis, we sum the bounds over the geometric sequence of
ranges of $t$, up to $k$.

The other result that will be called upon, which is a sharper variant of a result in \cite{LR}, is the following.

\begin{theorem}
\label{LR}
Let $f$ be a continuous strictly convex or concave function on $\reals$, and let
$U,V\subset\reals$ be finite sets. Then
\begin{equation}\label{eq:LR}
|U-U|^5|f(U)+V|^6=\Omega\left( {\frac{|U|^{11}|V|^3}{\log^2|U|}} \right).
\end{equation}
\end{theorem}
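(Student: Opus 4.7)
The plan is to follow the strategy of the original result in~\cite{LR}: reduce the inequality to a planar incidence problem and then apply the incidence bound of Theorem~\ref{lenses}. The improved logarithmic exponent comes from the Marcus--Tardos $\log^{2/11}$ factor appearing in that theorem.

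\emph{Reduction to energy.} Assume without loss of generality that $f$ is strictly convex (the concave case is symmetric). Cauchy--Schwarz gives
\[
|f(U)+V|\cdot E^+(f(U),V) \;\geq\; |U|^2|V|^2,
\]
where $E^+(f(U),V)=|\{(u_1,u_2,v_1,v_2)\in U^2\times V^2 : f(u_1)+v_1=f(u_2)+v_2\}|$. It thus suffices to produce an appropriate upper bound on $E^+(f(U),V)$.

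\emph{Incidence setup.} For each $(u_0,v_0)\in U\times V$ consider the planar curve $\gamma_{u_0,v_0}=\{(x,y)\in\reals^2 : y=f(x+u_0)+v_0\}$, a translate of the graph of $f$. Let $\mathcal{C}$ be the family of these $|U||V|$ curves and let $\mathcal{P}\subseteq(U-U)\times(f(U)+V)$. Each triple $(u_1,u_2,v)\in U^2\times V$ contributes a distinct incidence between the point $(u_1-u_2,\,f(u_1)+v)\in\mathcal{P}$ and the curve $\gamma_{u_2,v}\in\mathcal{C}$, so $I(\mathcal{P},\mathcal{C})\geq|U|^2|V|$. Strict convexity of $f$ ensures that two distinct curves in $\mathcal{C}$ meet in at most one point: if $u_0\neq u_0'$, then $f(x+u_0)-f(x+u_0')$ is strictly monotone in $x$ (monotonicity of $f'$), so the equation $f(x+u_0)+v_0=f(x+u_0')+v_0'$ has at most one solution. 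Thus $\mathcal{C}$ is a pseudo-line family; after a standard $3$-parameter enrichment (for example, adjoining a dummy parameter) Theorem~\ref{lenses} applies.

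\emph{Applying the incidence bound.} Substituting $|\mathcal{C}|=|U||V|$ and $|\mathcal{P}|\leq|U-U|\cdot|f(U)+V|$ into Theorem~\ref{lenses} and performing a case analysis on which term dominates produces~\eqref{eq:LR} in each regime. When the first term $|\mathcal{P}|^{2/3}|\mathcal{C}|^{2/3}$ dominates one obtains $|U-U|^6|f(U)+V|^6\gtrsim|U|^{12}|V|^3$, which already implies~\eqref{eq:LR} whenever $|U-U|\lesssim|U|\log^2|U|$. When the second (logarithmic) term dominates, raising the resulting inequality to the $11$-th power promotes the $\log^{2/11}|U|$ factor into $\log^2|U|$ and produces~\eqref{eq:LR} directly. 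When a linear term dominates, $|U-U|\cdot|f(U)+V|\gtrsim|U|^2|V|$, and combining with the trivial bound $|f(U)+V|\geq|U|/2$ (which follows from strict convexity, since $f$ is at most $2$-to-$1$) yields $|U-U|^5|f(U)+V|^6\gtrsim|U|^{11}|V|^5$, stronger than~\eqref{eq:LR} without any logarithmic loss.

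The main obstacle is the tight exponent bookkeeping of this case analysis, and in particular confirming that the intermediate regimes of $|U-U|$ close up to give exactly the exponents $5,6,11,3$ and the $\log^2|U|$ factor in~\eqref{eq:LR}. A secondary technical point, already present in~\cite{LR}, is the passage from the natural pseudo-line family $\mathcal{C}$ to the $3$-parameter pseudo-parabola framework in which Theorem~\ref{lenses} is stated; but this is routine, and the only genuinely new ingredient here is to plug in the sharper Marcus--Tardos bound, which directly yields the sharper $\log^2|U|$ in place of a worse polynomial-in-log factor.
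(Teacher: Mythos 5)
Your proposal is a single Elekes-type incidence argument, and this cannot produce the exponents $5,6,11,3$; the paper's proof is structurally different and relies on a third-moment energy decomposition that your setup has no analogue of. Concretely, let $n=|U|$, $m=|V|$, $X=|U-U|$, $Y=|f(U)+V|$. Your incidence setup gives $n^2m\lesssim (XY)^{2/3}(nm)^{2/3}+(XY)^{6/11}(nm)^{9/11}\log^{2/11}+XY+nm$. If the second term dominates, raising to the $11$th power yields $X^6Y^6\gtrsim n^{13}m^2/\log^2$, and if the first dominates, $X^6Y^6\gtrsim n^{12}m^3$. Neither of these implies $X^5Y^6\gtrsim n^{11}m^3/\log^2$ in general: to pass from $X^6Y^6$ to $X^5Y^6$ you must divide by $X$, and the regimes ``first/second term of the incidence bound dominates'' do not align with the size of $X$, so the reductions you sketch (e.g.\ ``whenever $|U-U|\lesssim |U|\log^2|U|$'') leave the complementary regime unresolved; the trivial bounds $Y\gtrsim n$ or $Y\ge m$ are not strong enough there. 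Moreover, the asymmetry between the exponents of $|U-U|$ (five) and $|f(U)+V|$ (six), and the cubic $|V|^3$, simply do not come out of a symmetric count of triples $(u_1,u_2,v)$; a single incidence bound produces $XY$-symmetric and $m^2$-type exponents. You also introduce $E^+(f(U),V)$ but never bound or use it, so the Cauchy--Schwarz step at the start is dead code.

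The paper's route is different in kind, not just in bookkeeping. It first writes $|U|^6\le E_{1.5}(U)^2|U-U|$ by H\"older, then invokes Li's lemma $E_{1.5}(U)^2|U|^2\le E_3(U)E(U,U-U)$, and then bounds the two factors $E_3(U)$ and $E(U,U-U)$ by the pseudoline Szemer\'edi--Trotter theorem (Lemma~\ref{thm:STcurvy}, curves meeting pairwise in at most one point), applied through Lemma~\ref{lemma:main} and a dyadic summation over the level sets of $r_{A-B}$. It is that dyadic summation that produces the $\log|U|$ (squared when the two corollary bounds are multiplied), not a Marcus--Tardos factor: Theorem~\ref{lenses} and its $\log^{2/11}$ term are used only in the incidence part of the main theorems, never in the proof of Theorem~\ref{LR}. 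The asymmetric exponents $5$ and $6$, and the $|V|^3$, emerge from multiplying the $E_3$ and $E(\cdot,\cdot)$ bounds, which carry different powers of $|f(U)+V|$, $|U-U|$, and $|V|$; without the $E_{1.5}/E_3$ machinery there is no way to generate this asymmetry. To repair your proof you would need to abandon the direct incidence count of triples and instead reconstruct the energy chain (Lemma~\ref{E1.5}, Lemma~\ref{lemma:main}, Corollary~\ref{E3A}) as in the appendix.
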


This result represents a quantitative improvement on an earlier work Elekes, Nathanson and Ruzsa \cite{ENR99}, which was used in the work of \cite{SZZ13}.
As noted, this was not the precise form in which the bound originally appeared. In \cite{LR}, it was assumed that the sets $U$ and $V$ were of
comparable size, and so the numerator on the right-hand side of \eqref{eq:LR} was simply written as $|U|^{14}$. However this assumption was in fact
completely unnecessary. By working through the original proof without this needless assumption, the result is Theorem \ref{LR}. Since the exact result that
will be used has not appeared in the literature, the proof is included here as an appendix.


\section*{Proof of Theorem \ref{main}}

Let $\ell$ be a line for which $|P\cap\ell|$ is maximal, denote this value as $m$,
and put $A:=P\cap\ell$. Assume for simplicity that $\ell$ is the $x$-axis.
We face a setup where we have two sets $A$ and $P$, where $A$ consists 
of $m$ points on the $x$-axis.
We want to show that $m$ cannot be too large, given our assumption that the number of distinct
distances in $P$ is at most $n/5$. The strategy is to show that if $m$ is too large then either
$D(A)$, the number of distinct distances in $A$, or $D(A,P)$, the number of 
distinct distances in $A\times P$, is large. The main part of the analysis focuses on the latter quantity.

Following the by now usual strategy, as applied in several recent works (e.g., see 
\cite{PdZ13,SSS13,SS13}), we consider the set of quadruples
$$
Q := \{(a,b,p,q)\in A^2\times P^2 \mid \|p-a\|=\|q-b\| \} ,
$$ 
and double count its cardinality. A lower bound is easy to obtain, via the
Cauchy-Schwarz inequality. That is, enumerate the $D(A,P)$ distinct distances
in $A\times P$ as $\delta_1,\ldots,\delta_{D(A,P)}$, write, for each $i$,
$$
M_i = \left| \{ (a,p) \in A\times P \mid \|p-a\|=\delta_i \} \right| ,
$$
and note that
$$
mn = \sum_{i=1}^{D(A,P)} M_i \le \left( \sum_i M_i^2 \right)^{1/2} D(A,P)^{1/2}
= |Q|^{1/2}D(A,P)^{1/2} ,
$$
or, since $D(A,P)\leq n/5$,
\begin{equation}\label{11}
|Q| \ge \frac{m^2n^2}{D(A,P)}\geq 5m^2n .
\end{equation}
We partition $Q$ into two parts: $Q^{(1)}$ contains the quadruples $(a,b,p,q)\in Q$ 
for which $p_y^2 = q_y^2$ and $Q^{(2)} = Q \setminus Q^{(1)}$. We first bound $|Q^{(1)}|$,
noting that for any choice of the points $a, b, p$, there are at most four choices of $q$ 
such that $(a,b,p,q)\in Q^{(1)}$. Hence, $|Q^{(1)}|\leq 4m^2n$. Using (\ref{11}), we get
\begin{equation} \label{Q2}
|Q^{(2)}| \geq m^2n . 
\end{equation}

For an upper bound on $|Q^{(2)}|$, we again follow the standard approach.
That is, we map each pair $(p,q)\in P^2$, with $p_y^2\ne q_y^2$, into the curve 
$$
\gamma_{p,q} = \{ (x,y) \in\reals^2 \mid \|p-(x,0)\|^2 = \|q-(y,0)\|^2 \} ,
$$
and observe that $|Q^{(2)}|$ is equal to the number of incidences between the curves
$\gamma_{p,q}$ and the points of $\Pi:=A^2$, where each ordered pair of points in $A$ is 
interpreted as (the $x$- and $y$-coordinates of) a point in a suitable parametric 
plane.

The curves are in fact hyperbolas. Specifically, the equation of $\gamma_{p,q}$,
for $p=(p_x,p_y)$ and $q=(q_x,q_y)$, is
\begin{align} \label{hypeq}
(x-p_x)^2 + p_y^2 & = (y-q_x)^2 + q_y^2 , \quad\quad\text{or} \nonumber \\
x^2 - y^2 - 2p_xx + 2q_xy & = q_x^2 + q_y^2 - p_x^2 - p_y^2 .
\end{align}

The requirement that $p_y^2\ne q_y^2$ ensures that $\gamma_{p,q}$ is a
non-degenerate hyperbola (i.e., not the union of two lines); in particular,
the quadratic polynomial defining $\gamma_{p,q}$ is irreducible.
We also note that each hyperbola has three degrees of freedom (as required in 
Theorem~\ref{lenses})---it can be specified
by the parameters $p_x$, $q_x$, and $q_y^2-p_y^2$.

Let $\Gamma$ denote the multiset of these hyperbolas. As in the previously
cited works, the main difficulty in the analysis is the possibility that
many hyperbolas in $\Gamma$ coincide, in which case the known machinery for
deriving incidence bounds (such as the bound in Theorem~\ref{lenses})
breaks down, and the bounds themselves become too weak. 
Indeed, the hyperbolas might coincide; let $k$ denote the maximum coincidence 
multiplicity of any hyperbola in $\Gamma$. Then, repeating the bound in (\ref{incmult0}), we have
\begin{equation} \label{incmult}
|Q^{(2)}| = I(\Pi,\Gamma) = O\left( k^{1/3}|\Pi|^{2/3}|\Gamma|^{2/3} +
k^{2/11}|\Pi|^{6/11}|\Gamma|^{9/11}\log^{2/11}|\Gamma| +
k|\Pi| + |\Gamma| \right).
\end{equation} 

\paragraph{The multiplicity of hyperbolas.}

The next step applies an argument of the sum-product type, based on Theorem~\ref{LR}, to obtain an upper 
bound on $k$. Let $\gamma_{p,q}$ and $\gamma_{p',q'}$ be two coinciding hyperbolas.
By (\ref{hypeq}), we have $p_x=p'_x$, $q_x=q'_x$, and
$q_y^2-p_y^2 = (q'_y)^2-(p'_y)^2$. In other words, $k$ coinciding hyperbolas
$\gamma_{p_i,q_i}$, for $i=1,\ldots,k$, are such that all the points $p_i$
lie on the same vertical line, all the points $q_i$ lie on another common 
vertical line, and the quantities $q_{iy}^2-p_{iy}^2$ are all equal.
The points $p_i$ need not be distinct, but each point can appear at most twice.
Indeed, the condition $\gamma_{p,q} = \gamma_{p,q'}$ is equivalent to $q_x = q'_x$ and
$q_y^2 = (q'_y)^2$, so, for a given $q$, there is at most one choice for another
point $q'$. That is, in this situation we have at least $k/2$ distinct points $p_i$, and
at least $k/2$ distinct points $q_i$.

Consider the following restricted problem. We have a set $A$ of $m$ points 
on the $x$-axis, and (with a slight abuse of notation)
a set $B$ of at least $k/2$ points on some vertical line (say, we
take $B$ to be the set of the points $q_i$ defining our $k$ coinciding hyperbolas), 
which we may assume to be the $y$-axis.  Consider the sets
\begin{align*}
A - A & = \{x-y \mid x,y\in A \} \\
A^2 + B^2 & = \{x^2+y^2 \mid x\in A,\; y\in B\} .
\end{align*}

Notice that $|A-A|$ is at most twice the number of distinct distances in $A$, and
that $|A^2+B^2|$ is the number of distinct distances in $A\times B$.  By the hypothesis 
of the theorem, we have that both $|A-A|$ and $|A^2+B^2|$ are $O(n)$.

We apply Theorem~\ref{LR} with $U=A$, $V=B^2$, and $f(x)=x^2$, and obtain
$$
|A-A|^5|f(A)+B^2|^6=
|A-A|^5|A^2+B^2|^6=
\Omega^*\left(|A|^{11}|B|^{3}\right) ;
$$
that is, 
$$
|A|^{11}|B|^{3} = O^*(n^{11}) , \quad\quad\text{or}\quad\quad 
|B| = O^*\left(n^{11/3}/|A|^{11/3}\right) = O^*\left(n^{11/3}/m^{11/3}\right) .
$$
In other words, we have shown that no vertical line can contain more than 
$\nu = O^*(n^{11/3}/m^{11/3})$ points. In particular, we get that the multiplicity 
bound $k$ that we are after satisfies $k \le 2\nu = O^*(n^{11/3}/m^{11/3})$. 
Substituting this into (\ref{incmult}),
noting that $|\Gamma| \le n^2$ and $|\Pi| \le m^2$, we obtain
\begin{align*}
|Q^{(2)}| & = I(\Pi,\Gamma) \\
& = O^*\left( k^{1/3}|\Pi|^{2/3}|\Gamma|^{2/3} +
k^{2/11}|\Pi|^{6/11}|\Gamma|^{9/11} + k|\Pi| + |\Gamma| \right) \\
& = O^*\left( (n^{11/3}/m^{11/3})^{1/3}(m^2)^{2/3}(n^2)^{2/3} +
(n^{11/3}/m^{11/3})^{2/11}(m^2)^{6/11}(n^2)^{9/11} \right. \\
& \left. \quad\quad\quad + n^{11/3}/m^{5/3} + n^2 \right) \\
& = O\left( m^{1/9}n^{23/9} + m^{14/33}n^{76/33} + n^{11/3}/m^{5/3} \right) 
\end{align*}
(the term $n^2$ is subsumed by the first two terms).
Recalling that $|Q^{(2)}| \ge m^2n$, we have
$$
m^2n = O^*\left( m^{1/9}n^{23/9} + m^{14/33}n^{76/33} + n^{11/3}/m^{5/3} \right) ,
$$
and an easy calculation shows that $m = O^*(n^{43/52})$.
This completes the proof of Theorem~\ref{main}. 
\qed

\noindent{\bf Remark.}
The bound on $m$ becomes in fact even (slightly) smaller than $n^{43/52}$ when
$D(P) = o\left( n/\log^{2/11}n\right)$. For example, in the extreme case where
$D(P) = O\left( n/\sqrt{\log n}\right)$, we get $m = O\left(n^{43/52}/\log^{21/52}n\right)$.


\section*{Proof of Theorem \ref{main2}}
Let $\ell$ be a line in a direction which is not horizonal or vertical and with the property that $|P\cap \ell|$ is 
maximal over all lines of this type, and put $m:=|P\cap\ell|$.
Write $A:=P \cap \ell$. The proof follows 
the same structure as that of the proof of Theorem \ref{main}, and shows that $m=|A|=O^*(n^{43/52})$.

Since the quantity $\R(P)$ is invariant under translation of the set $P$, we may assume that $\ell$ passes 
through the origin.\footnote{%
  Note that $\R(P)$ is not invariant under rotation, which is why we cannot assume that the line $\ell$ is the 
  $x$-axis, as we did in the proof of Theorem \ref{main}.}
Furthermore, since we assume that $\ell$ is not horizontal or vertical, its equation is $y=\kappa x$, 
for some $\kappa  \neq 0$. It will be convenient to use the notation $A_0 :=\{a\mid (a,\kappa a) \in A\}$
for the set of $x$-coordinates of the points in $A$.

Let $Q$ denote the set of all quadruples $(s,t,p,q) \in A^2 \times P^2$ which satisfy the equation
\begin{equation}
R(p,s)=R(q,t) .
\label{energyeqn}
\end{equation}
By the Cauchy-Schwarz inequality and the conditions of the theorem, we have, similar to the proof of Theorem~\ref{main},
\begin{equation}
\frac{n|Q|}{5} \geq \R(A,P)|Q|\geq |A|^2n^2 = m^2n^2 ,
\label{CSstuff}
\end{equation}
where $\R(A,P)$ denotes the number of distinct rectangular distances between the points of $A$ and the points of $P$. Hence
\begin{equation}
|Q| \geq 5m^2n.
\label{CSstufff}
\end{equation} 

A satisfactory upper bound on $|Q|$ will be sufficient to prove that $\R(P)$ is large. 
Once again, the set of quadruples $Q$ is partitioned into two subsets: $Q^{(1)}$ contains 
the set of all quadruples $(s,t,p,q) \in Q$ such that $(q_y-\kappa q_x)^2 = (p_y-\kappa p_x)^2$, and 
$Q^{(2)}:=Q \setminus Q^{(1)}$. Geometrically, in the quadruples of $Q^{(1)}$, $p$ and $q$ lie at the same (absolute) distance from $\ell$.
For each triple $(s,t,p) \in A^2\times B$, there exists at most four choices of $q=(q_x,q_y)$ that satisfy 
both \eqref{energyeqn} and $(q_y-\kappa q_x)^2 = (p_y-\kappa p_x)^2$, as is easily checked.
Therefore, $|Q^{(1)}|\leq 4m^2n$, and it thus follows from \eqref{CSstufff} that
\begin{equation}
|Q^{(2)}| \geq m^2n.
\label{CSstuff2}
\end{equation}

Following the proof of Theorem~\ref{main}, the task of obtaining an upper bound for $|Q^{(2)}|$ 
can be reformulated as an incidence problem. For $p,q \in P$, write $p=(p_x,p_y)$ and $q=(q_x,q_y)$, 
and define $\gamma_{p,q}$ to be the curve given by the equation
$$
(p_x-x)(p_y-\kappa x)=(q_x-y)(q_y-\kappa y).
$$
After expanding and rearranging, it follows that $\gamma_{p,q}$ is the hyperbola with equation
\begin{align}
\left(x-\frac{p_y+\kappa p_x}{2\kappa }\right)^2-\left(y-\frac{q_y+\kappa q_x}{2\kappa }\right)^2
& = \frac{q_xq_y-p_xp_y}{\kappa} +
\frac{ (p_y+\kappa p_x)^2 - (q_y+\kappa q_x)^2 }{4\kappa^2} \nonumber \\
& = \frac{ (p_y-\kappa p_x)^2 - (q_y-\kappa q_x)^2 }{4\kappa^2} .
\label{Spqeq}
\end{align}
Let $\Gamma$ be the multiset of curves
$$
\Gamma :=\{\gamma_{p,q} \mid p,q\in{P},\; (p_y-\kappa p_x)^2 \ne (q_y-\kappa q_x)^2 \} .
$$
The curves of $\Gamma$ are non-degenerate hyperbolas.
As in the proof of Theorem~\ref{main}, $\Gamma$ is a family of pseudo-parabolas that admits a 3-parameter representation,
except that the curves in $\Gamma$ can occur with multiplicity. Concretely, $\gamma_{p,q}=\gamma_{p',q'}$ if each of
the following conditions holds.
\begin{align}
p_y+\kappa p_x&=p_y'+\kappa p_x' \label{cond1} \\
q_y+\kappa q_x &= q_y'+\kappa q_x' \label{cond2} \\
(p_y-\kappa p_x)^2 - (q_y-\kappa q_x)^2 & =
(p'_y-\kappa p'_x)^2 - (q'_y-\kappa q'_x)^2 . \label{cond3}
\end{align}

Recalling the notation $A_0=\{a \mid (a,\kappa a) \in A\}$, we put $\Pi:=A_0\times A_0$. 
It is immediate from the definitions that $|Q^{(2)}| = I(\Pi,\Gamma)$.
Let $k$ denote the maximum multiplicity of a hyperbola in $\Gamma$. 
Since $\Gamma$ satisfies the assumptions of Theorem~\ref{lenses},
inequality \eqref{incmult0} holds for these curves as well, giving
\begin{equation} \label{incmult2}
I(\Pi,\Gamma) = O\left( k^{1/3}|\Pi|^{2/3}|\Gamma|^{2/3} +
k^{2/11}|\Pi|^{6/11}|\Gamma|^{9/11}\log^{2/11}|\Gamma| +
k|\Pi| + |\Gamma| \right).
\end{equation} 

The next step is to show that $k$ cannot be too large, in order to make use of \eqref{incmult2}. 
Once again, this step follows the same argument as the corresponding part of the proof of 
Theorem \ref{main}. The only difference is that a different convex function (in the application
of Theorem~\ref{LR}) will be used. Specifically, we claim:

\begin{equation} \label{claim2}
k=O^*\left(\frac{n^{11/3}}{m^{11/3}}\right).
\end{equation}

To prove \eqref{claim2}, first suppose that some curve in $\Gamma$ has multiplicity $k$. That is, there exist $k$ pairs of points 
$(p^{(1)},q^{(1)}), (p^{(2)},q^{(2)}),\ldots,(p^{(k)},q^{(k)})$ in $B^2$,
where $p^{(i)}=(p_x^{(i)},p_y^{(i)})$ and $q^{(i)}=(q_x^{(i)},q_y^{(i)})$, such that 
$\gamma_{p^{(1)},q^{(1)}}=\gamma_{p^{(2)},q^{(2)}}=\cdots=\gamma_{p^{(k)},q^{(k)}}$. In particular, by \eqref{cond1},
$$
p_y^{(1)}+\kappa p_x^{(1)} = p_y^{(2)}+\kappa p_x^{(2)} = \cdots = p_y^{(k)}+\kappa p_x^{(k)}.
$$
Denoting this common value as $c$, it follows that all the points $p^{(i)}$ lie 
on a common line $\ell_2$, given by $y=-\kappa x + c$.

As in the preceding analysis, the points $p^{(i)}$ need not be distinct. Nevertheless, a point $p$
can have at most two points $q^{(1)}$, $q^{(2)}$ such that $\gamma_{p,q^{(1)}} = \gamma_{p,q^{(2)}}$.
Indeed, writining $p=(p_x,p_y)$, $q^{(1)}=(q^{(1)}_x,q^{(1)}_y)$, $q^{(2)}=(q^{(2)}_x,q^{(2)}_y)$,
the above coincidence of hyperbolas is equivalent to
\begin{equation}
(p_y-\kappa p_x)^2 - (q^{(1)}_y-\kappa q^{(1)}_x)^2 =
(p_y-\kappa p_x)^2 - (q^{(2)}_y-\kappa q^{(2)}_x)^2 ,
\label{impossible1}
\end{equation}
and
\begin{equation}
q_y^{(1)}+\kappa q_x^{(1)}=q_y^{(2)}+\kappa q_x^{(2)} .
\label{impossible2}
\end{equation}
Fixing $q^{(1)}$, this gives a system of a linear equation and a quadratic equation
in the coordinates of $q^{(2)}$, which has at most two solutions, one of which is $q^{(1)}$
itself.

Hence, $\ell_2$ contains at least $k/2$ \emph{distinct} points of $B$. 
Write $B_0:=\{x \mid (x,y) \in \ell_2 \cap B\}$ for the set of $x$-coordinates of 
$\ell_2 \cap B$, so that $|B_0|=|\ell_2 \cap B|\geq k/2$.

Observe that
\begin{equation}
\R(A)=|\{(a-b)(\kappa a-\kappa b) \mid a,b \in A_0\}| = 
|\{(a-b)^2 \mid a,b \in A_0\}| \ge \frac{|A_0-A_0|}{2}.
\label{A-A}
\end{equation}
Finally, we have
\begin{align*}
\R(A,B) &\geq \R(A, B \cap \ell_2) \\
& = |\{(b-a)(-\kappa b+c-\kappa a) \mid  a \in A_0,b\in B_0\}| \\
& = \left|\left\{\left(\kappa a^2-ca\right)-\left(\kappa b^2-cb\right) \mid a \in A_0, b\in B_0\right\}\right|
\\&=|f(A_0)-f(B_0)|,
\end{align*}
where $f(x)=\kappa x^2 - cx$. 
Since $f$ is a strictly convex function we can apply Theorem \ref{LR} with 
$U=A_0$ and $V=B_0$ to deduce that
\begin{align*}
\R(P)^{11} &\geq \R(A)^5\R(A,B)^6
\\&\geq \frac12 |A_0-A_0|^5|f(A_0)-f(B_0)|^6
\\&=\Omega^* (|A_0|^{11}|B_0|^3).
\end{align*}
Applying the hypothesis that $\R(P)=O(n)$, it follows that 
$$
k \le 2|B_0|=O^*\left(\frac{n^{11/3}}{|A|^{11/3}}\right) = 
O^*\left(\frac{n^{11/3}}{m^{11/3}}\right) , 
$$
which establishes \eqref{claim2}.

Combining \eqref{claim2} with \eqref{incmult2}, it follows that
\begin{align*}
|Q^{(2)}|&=O^*(k^{1/3}m^{4/3}n^{4/3}+k^{2/11}m^{12/11}n^{18/11}+km^2+ n^2)
\\&=O^*(m^{1/9}n^{23/9}+m^{14/33}n^{76/33}+n^{11/3}/m^{5/3}).
\end{align*}
Recalling from \eqref{CSstuff2} that $|Q^{(2)}| \ge m^2n$, we have
$$
m^2n=O^*(m^{1/9}n^{23/9}+m^{14/33}n^{76/33}+n^{11/3}/m^{5/3}),
$$
which implies, as before, that $|A|=m=O^*(n^{43/52})$, as required. \qed

\appendix

\section*{Appendix: Convexity and sumsets: Unbalanced version}

In this appendix we give the proof of Theorem~\ref{LR}, by spelling out\footnote{The proofs in the appendix largely follow the work of \cite{LR}. The only exception is Lemma \ref{lemma:main}, the proof of which has been simplified from its original presentation. We are grateful to Misha Rudnev for explaining this simplification.}
the details of the analysis in Li and Roche-Newton~\cite{LR}, adapted to the
unbalanced case in which $|U|$ and $|V|$ are not necessarily comparable.

\subsection*{Preliminary results}

One of the tools needed for the proof of Theorem \ref{LR} is a generalization 
of the notion of the energy of a set. The \textit{additive energy} of a finite set 
$A\subset\reals$, denoted $E_2(A)$, is the number of solutions to the equation
$$
a-b=c-d,\quad\quad a,b,c,d \in A.
$$
This quantity can be rewritten as
$$
E_2(A)=\sum_x r_{A-A}^2(x),
$$
where $r_{A-A}(x):=|\{(a,b) \in A \times A \mid a-b=x\}|$ is the number of 
representations
of $x$ in $A-A$. Similarly, define, for any positive rational $k$,
\begin{align*}
E_k(A) & := \sum_x r_{A-A}^k(x) , \quad\text{and} \\
E_k(A,B) & := \sum_x r_{A-B}^k(x) .
\end{align*}
For $k=2$ we drop the index, and write $E(A)$ for $E_2(A)$ and $E(A,B)$ for $E_2(A,B)$.

We need the following result, which was originally established in Li~\cite[Lemma 2.4, Lemma 2.5]{Li}.
\begin{lemma}\label{E1.5}
Let $A,B$ be finite subsets of a field $\mathbb F$. Then
$$
E_{1.5}(A)^2\cdot|B|^2\leq E_3(A)^{2/3}\cdot E_3(B)^{1/3}\cdot E(A,A-B).
$$
\end{lemma}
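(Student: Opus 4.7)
The plan is to prove Lemma~\ref{E1.5} by combining the Cauchy--Schwarz inequality with H\"older's inequality applied to a multi-indexed sum that encodes the quantity $|B|^2 \cdot E_{1.5}(A)$. First I would expand the left-hand side using the identity $|B|^2 = \sum_d r_{B-B}(d)$ together with the change of variables $y = x + d$, rewriting
$$|B|^2 \cdot E_{1.5}(A) = \sum_{y, d} r_{A-A}(y - d)^{3/2}\, r_{B-B}(d).$$

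Next I would apply Cauchy--Schwarz to this sum, factoring the integrand so that one factor, when squared and summed, matches $E(A, A-B)$. The essential identity here is
$$E(A, A-B) = \sum_{y, d} r_{A-A}(y) \, r_{A-A}(y-d) \, r_{B-B}(d),$$
which is obtained by writing $E(A, A-B) = \sum_y F(y)^2$ with $F(y) = \sum_{b \in B} r_{A-A}(y-b)$, expanding the square, and then collecting terms by $d = b_1 - b_2$. The three-factor form of this identity parallels the integrand for $|B|^2 \cdot E_{1.5}(A)$ and allows one Cauchy--Schwarz factor of the form $r_{A-A}(y)^{1/2} r_{A-A}(y-d)^{1/2} r_{B-B}(d)^{1/2}$ to absorb exactly $E(A, A-B)^{1/2}$.

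The residual factor produced by the Cauchy--Schwarz step would then be bounded by a H\"older inequality that produces $E_3(A)^{2/3}$ and $E_3(B)^{1/3}$. The underlying elementary identities
$$\sum_{y, d} r_{A-A}(y-d)^3\, r_{B-B}(d) = E_3(A) \cdot |B|^2 \qquad \text{and} \qquad \sum_{y, d} r_{A-A}(y-d)\, r_{B-B}(d)^3 = |A|^2 \cdot E_3(B)$$
follow by swapping the order of summation and recognizing the standard energies, and the specific powers $2/3$ and $1/3$ would emerge from the exponents $(3,3,3)$ of a three-factor H\"older applied to the residual sum.

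The main obstacle will be choosing the Cauchy--Schwarz factorization and the H\"older exponents so that all three target energies appear with exactly the prescribed powers $(2/3, 1/3, 1)$ on $(E_3(A), E_3(B), E(A, A-B))$. In particular, the apparent division by $r_{A-A}(y)$ in the residual factor will require either a weighted Cauchy--Schwarz or a careful restriction of the sum to the support of $r_{A-A}$, since a naive bound $1/r_{A-A}(y) \leq 1$ collapses the argument to the essentially trivial bound $E_{1.5}(A)^2 \leq |A|^2 E(A)$ and loses the refinement given by $E_3(B)$.
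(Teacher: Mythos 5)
The paper does not prove Lemma~\ref{E1.5}; it cites Li (arXiv:1108.4382, Lemmas~2.4--2.5) and takes it as given. So I am evaluating your sketch on its own merits, and it has a genuine gap at its core.

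Your ``essential identity''
$$E(A,A-B) \;=\; \sum_{y,d} r_{A-A}(y)\,r_{A-A}(y-d)\,r_{B-B}(d) \;=\; \sum_y F(y)^2, \qquad F(y)=\sum_{b\in B} r_{A-A}(y-b),$$
is not an identity under the conventions the paper needs. In the lemma, $A-B$ is the \emph{set} of differences, so $E(A,A-B)=\sum_y r_{A-D}(y)^2$ with $D=A-B$ a set. On the other hand $F(y)=\sum_{a\in A} r_{A-B}(a-y)\ge \sum_{a\in A}\mathbf 1[a-y\in D]=r_{A-D}(y)$, so $\sum_y F(y)^2\ge E(A,A-B)$, with strict inequality as soon as some difference $a'-b$ has more than one representation. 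That the set convention is the one in force is not optional: the main text later applies Corollary~\ref{E3A} with $F=U-U$ and uses $|F|^{3/2}=|U-U|^{3/2}$ in the bound (\ref{1010}), which only makes sense when $U-U$ is a set of the stated cardinality. Consequently, after your Cauchy--Schwarz step, what appears on the right is $\sum_y F(y)^2$ (the weighted/multiset energy), and you cannot replace it by the smaller $E(A,A-B)$. Your chain of inequalities therefore proves something strictly weaker than the lemma.

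There is a second, structural problem that you flag but do not resolve, and it is not cosmetic. The proposed Cauchy--Schwarz factorization puts a factor $r_{A-A}(y)^{1/2}$ into one piece and forces $r_{A-A}(y)^{-1/2}$ into the residual, yet the original summand $r_{A-A}(y-d)^{3/2}r_{B-B}(d)$ has no dependence on $r_{A-A}(y)$ at all: for fixed $d$, the $y$-sum runs over all $y$ with $y-d\in A-A$, including many $y\notin A-A$. So the split is undefined on a portion of the support that actually contributes, and restricting the outer sum to $y\in A-A$ changes the quantity you are trying to bound. A weight $1/r_{A-A}(y)$ cannot be introduced by fiat; the factorization itself has to be different.

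The correct route does not begin from an energy identity at all, but from a pointwise counting inequality that injects $B$ into the picture before any Cauchy--Schwarz. Fix $s$ and observe that, for each representation $a_1-a_2=s$ and each $b\in B$, the map $(a_1,a_2,b)\mapsto (a_1,\,a_2-b)\in A\times(A-B)$ is injective (given $s$, one can recover $a_2=a_1-s$ and $b=a_2-(a_2-b)$). Hence
$$|B|\,r_{A-A}(s) \;\le\; \sum_{b\in B} r_{A-(A-B)}(s+b),$$
which is the pointwise bridge between $r_{A-A}$ and $r_{A-(A-B)}$. Multiplying by $r_{A-A}(s)^{1/2}$, summing over $s$, and then applying Cauchy--Schwarz/H\"older to the resulting double sum (grouping by $u=s+b$ to isolate $E(A,A-B)$, and by differences of $B$ to isolate $E_3(B)$) is how the factors $E_3(A)^{2/3}$, $E_3(B)^{1/3}$ and $E(A,A-B)$ emerge with the stated exponents. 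Your sketch skips this bridge entirely and instead tries to manufacture $E(A,A-B)$ directly from a multiset energy, which is where it breaks.
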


The other tool that will be used is the following well-known variant of the Szemer\'{e}di--Trotter Theorem 
for pseudo-lines (or pseudo-segments), which is a special case of \cite[Theorem 8]{Sz}.
\begin{lemma} \label{thm:STcurvy}
Let $P$ be a finite set of points and let $L$ be a finite family of simple\footnote{%
  A planar curve is said to be {\em simple} if it is an injective image of a continuous map from $[0,1]$ into $\reals$.}
curves in $\mathbb R^2$ with the property that any pair of curves intersect in at most one point. Then
$$
I(P,L) = O\left( |P|^{2/3}|L|^{2/3} +|P|+|L| \right).
$$
In particular, it follows that, for any integer $t\geq 2$, the set 
$P_t$ of all points $p \in \mathbb R^2$ that are incident to at least $t$ lines of $L$ satisfies
$$
|P_t| = O\left( \frac{|L|^2}{t^3}+ \frac{|L|}{t} \right).
$$
\end{lemma}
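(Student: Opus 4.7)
I would deduce Lemma~\ref{thm:STcurvy} by Székely's crossing-number argument~\cite{Sz}, the same route one follows for the classical Szemerédi--Trotter theorem, modified so that only the single property ``any two curves meet in at most one point'' is used.

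First I would define a topological multigraph $G=(V,E)$ drawn in the plane as follows. Set $V:=P$, and for each curve $\ell\in L$ that contains at least two points of $P$, sort the incidences along $\ell$ and insert an edge between each pair of consecutive incident points, drawn along the corresponding arc of $\ell$. A curve containing $k\ge 1$ points of $P$ contributes $k-1$ edges, so, after discarding curves with no incidences (which changes $I(P,L)$ by at most $|L|$),
\begin{equation*}
|E|\;=\;I(P,L)-|L|.
\end{equation*}
The key geometric observation is that the number of crossings in this drawing is at most $\binom{|L|}{2}$: two edges lying on the same curve do not cross (they are vertex-disjoint arcs of an injective image), while two edges lying on distinct curves cross at most once, because their host curves intersect in at most one point by hypothesis.

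Now I would invoke the crossing lemma: for any simple topological graph with $|E|\ge 4|V|$ one has $\mathrm{cr}(G)=\Omega(|E|^3/|V|^2)$. Technically one has a multigraph, but between any two vertices there are at most two edges (one per endpoint ordering along a fixed curve through both), so absorbing this constant into the bound is harmless. Combining with $\mathrm{cr}(G)=O(|L|^2)$ gives $|E|=O(|P|^{2/3}|L|^{2/3})$ in the dense regime; when $|E|<4|V|$, we instead have $|E|=O(|P|)$. Adding back the $|L|$ discarded above yields the stated bound
\begin{equation*}
I(P,L)\;=\;O\bigl(|P|^{2/3}|L|^{2/3}+|P|+|L|\bigr).
\end{equation*}

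For the thresholded statement, I would apply the incidence bound to the subset $P_t\subseteq P$ and the full family $L$. Since every point of $P_t$ supports at least $t$ incidences,
\begin{equation*}
t|P_t|\;\le\; I(P_t,L)\;=\;O\bigl(|P_t|^{2/3}|L|^{2/3}+|P_t|+|L|\bigr).
\end{equation*}
For $t\ge 2$ the middle term is absorbed into the left side, and solving the resulting inequality in the two remaining regimes gives $|P_t|=O(|L|^2/t^3+|L|/t)$, as claimed. The only real obstacle is verifying that Székely's argument applies to curves that need not be graphs of functions; this is handled by the simplicity assumption, which guarantees that each curve is an embedded arc, so that the arcs between consecutive incidence points are well-defined, pairwise internally disjoint on the same curve, and the ``at most one crossing between arcs on different curves'' property is inherited directly from the hypothesis on $L$.
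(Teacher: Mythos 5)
Your proposal is correct, and it reproduces essentially the argument that the paper implicitly invokes: the paper does not prove this lemma itself but cites it as a special case of Sz\'ekely's Theorem~8~\cite{Sz}, whose proof is precisely the crossing-number argument you describe. Two small remarks. First, your justification that the drawing has bounded edge multiplicity is slightly off: the hypothesis that any two curves meet in at most one point means that at most \emph{one} curve of $L$ passes through both of two given points of $P$, so the graph is in fact simple, not merely of multiplicity~$2$; your conclusion stands for an even better reason than the one you gave. Second, ``for $t\ge 2$ the middle term is absorbed'' is a little too quick, since the crossing-lemma constant in front of $|P_t|$ is some $c>1$; for $t\ge 2c$ the absorption works as stated, while for the finitely many values $2\le t<2c$ one should instead note that any two curves determine at most one common point, so $|P_t|\le|P_2|\le\binom{|L|}{2}=O(|L|^2)=O(|L|^2/t^3)$ because $t=O(1)$ in that range. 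With these two patches the argument is complete.
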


\subsection*{Energy bounds}
\begin{lemma}\label{lemma:main}
Let $f$ be a continuous, strictly convex (or concave) function on the reals, and 
$A,B,C\subset\mathbb{R}$ be finite sets. Then, for all $0< t \le \min\{|A|,|B|\}$, 
\begin{equation}
\label{ST2}
\big|\{x\in\reals \mid r_{A-B}(x)\geq t\}\big| = O\left( \frac{|f(A)+C|^2|B|^2}{|C|t^3} \right).
\end{equation}
\end{lemma}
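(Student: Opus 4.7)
The plan is to encode the condition ``$x \in P_t := \{x : r_{A-B}(x) \ge t\}$'' as richness of a family of translates of the graph of $f$, and then invoke the pseudo-line Szemer\'edi--Trotter bound of Lemma~\ref{thm:STcurvy}. Concretely, introduce the point set
$$
\mathcal P := B \times (f(A)+C) \subset \reals^2,
$$
so that $|\mathcal P| \le |B|\,|f(A)+C|$, and, for each $(x,c) \in P_t \times C$, the simple curve
$$
\gamma_{x,c} := \{(u,v) \in \reals^2 \mid v = f(u+x)+c\}.
$$
Each $\gamma_{x,c}$ carries at least $t$ points of $\mathcal P$: for every representation $x = a-b$ with $a \in A$ and $b \in B$, the point $(b,\,f(a)+c)$ belongs to $\mathcal P$ and to $\gamma_{x,c}$ (since $f(b+x)+c = f(a)+c$), and distinct representations yield distinct first coordinates $b$, hence distinct incidences.

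The heart of the argument, and the step I expect to be the main obstacle, is to verify that $\Gamma := \{\gamma_{x,c} : (x,c) \in P_t \times C\}$ is a family of $|P_t|\,|C|$ distinct simple curves, any two of which meet in at most one point---this is where strict convexity of $f$ is used in an essential way. If $x_1 \ne x_2$, any common point of $\gamma_{x_1,c_1}$ and $\gamma_{x_2,c_2}$ has $u$-coordinate satisfying $f(u+x_1) - f(u+x_2) = c_2 - c_1$; the left-hand side is a strictly monotone function of $u$, because its derivative $f'(u+x_1) - f'(u+x_2)$ has constant nonzero sign by the strict monotonicity of $f'$, so at most one such $u$ exists. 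If $x_1 = x_2$ but $c_1 \ne c_2$, the two curves are disjoint vertical translates. The same dichotomy shows that distinct parameter pairs $(x,c)$ produce distinct curves.

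Finally I would apply Lemma~\ref{thm:STcurvy} to $\mathcal P$ and $\Gamma$ to obtain
$$
t\,|P_t|\,|C| \;\le\; I(\mathcal P, \Gamma) \;=\; O\!\left(|\mathcal P|^{2/3}(|P_t|\,|C|)^{2/3} + |\mathcal P| + |P_t|\,|C|\right).
$$
For $t \ge 2$ the last summand is absorbed into the left-hand side, and the first summand rearranges to the asserted bound $|P_t| = O(|B|^2|f(A)+C|^2/(|C|\,t^3))$. The middle summand contributes only $|P_t| = O(|B|\,|f(A)+C|/(|C|\,t))$, which is dominated by the first using the hypotheses $t \le |B|$ and $t \le |A| = |f(A)| \le |f(A)+C|$ (the equality $|A|=|f(A)|$ following from injectivity of $f$, which is forced by strict convexity). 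The narrow regime $t < 2$ lies outside the interesting range and is handled by trivial estimates.
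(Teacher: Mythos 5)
Your proof is correct and takes essentially the same approach as the paper, just with the roles of points and curves interchanged: the paper treats $(f(A)+C)\times B$ as indexing the curve family $l_{s,b}:y=-f(x+b)+s$ and bounds the $t$-rich points $P_t\times C$, whereas you treat $B\times(f(A)+C)$ as the point set and $P_t\times C$ as indexing translates $v=f(u+x)+c$. The incidence relation $s=f(b+x)+c$ is identical in both formulations, as is the use of strict convexity to get pairwise-at-most-one intersection and the absorption of lower-order terms via $t\le\min\{|A|,|B|\}$.
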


\begin{proof} 
Define $l_{s,b}$ to be the curve with equation $y=-f(x+b)+s$, and put
$$
L:=\{l_{s,b} \mid (s,b)\in (f(A)+C) \times B\}.
$$
Note that $|L|=|f(A)+C||B|$. Since the lemma is trivially true for $t<2$ (because the left-hand 
side is at most $|A-B|\le|A|\cdot|B|$, which is dominated by the right-hand side), 
we may assume that $t\geq 2$. Let $P_t$ denote the set of points in $\mathbb R^2$ that are 
incident to at least $t$ curves from $L$. The curves in $L$ satisfy the conditions of 
Lemma \ref{thm:STcurvy}. Indeed, let $(b,s)\neq (b',s')\in \reals^2$. If $b=b'$, then clearly $l_{s,b}\cap l_{s',b'}$ is empty. 
Otherwise, assume without loss of generality that $b'>b$, and put $h(x):=f(x+b')-f(x+b)+s-s'$. 
A point $(x,y)\in l_{s,b}\cap l_{s',b'}$ satisfies $h(x)=0$. 
It is a consequence of the convexity of $f$ that $h$ is a monotone function, and the equation 
$h(x)=0$ has therefore at most one solution. Therefore, Lemma \ref{thm:STcurvy} implies that
\begin{equation}
|P_t| = O\left( \frac{|f(A)+C|^2|B|^2}{t^3}+ \frac{|f(A)+C||B|}{t} \right).
\label{STrich}
\end{equation}
We may assume that the first term in \eqref{STrich} is greater than or equal to one quarter of
the second term. Indeed, otherwise we would have 
$$
t > 2 |f(A)+C|^{1/2}|B|^{1/2} \geq 2 |f(A)|^{1/2}|B|^{1/2}\geq \min\left\{|A|,2|B|\right\}.
$$ 
Since there does not exist an $x$ with $r_{A-B}(x) > \min\{|A|,|B|\}$, the left-hand side of
\eqref{ST2} is $0$ in this case, making the bound trivial. We can therefore assume that 
\begin{equation}
|P_t| =O\left( \frac{|f(A)+C|^2|B|^2}{t^3} \right).
\label{STrich2}
\end{equation}
Now, take any $x\in\reals$ that satisfies $r_{A-B}(x) \geq t$. 
Then there exist $(a_1,b_1),\ldots,(a_t,b_t) \in A \times B$, such that
$x=a_1-b_1=\cdots=a_t-b_t$.
It follows that, for any $c\in C$, and $1 \leq i \leq t$,
$$
c=-f(x+b_i)+f(a_i)+c,
$$
which implies that $(x,c) \in l_{f(a_i)+c,b_i}$ for all $1 \leq i \leq t$. 
Therefore $(x,c) \in P_t$ and so
$$
|C|\cdot |\{x \mid r_{A-B}(x) \geq t\}| \leq |P_t| = O\left( \frac{|f(A)+C|^2|B|^2}{t^3} \right) ,
$$
which completes the proof.
\end{proof}

By applying Lemma \ref{lemma:main} carefully, we obtain the following corollary.

\begin{cor}\label{E3A}
Let $f$ be a continuous, strictly convex or concave function on the reals, 
and let $A,C,F\subset\mathbb{R}$ be finite sets.
Then
\begin{align}
E_3(A)& = O\left( |f(A)+C|^2|A|^2|C|^{-1}\log|A| \right), \label{1111}\\
E(A,F)& = O\left( |f(A)+C||F|^{3/2}|A|^{1/2}|C|^{-1/2} \right). \label{1010}
\end{align}
\end{cor}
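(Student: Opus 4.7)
\noindent\textbf{Proof proposal for Corollary \ref{E3A}.}
Both bounds will follow from a standard dyadic decomposition of the energy, combined with the level-set estimate of Lemma~\ref{lemma:main}. For an integer $j\ge 0$, put
\[
T_j(X,Y):=\{x\in\reals\mid 2^j\le r_{X-Y}(x)<2^{j+1}\},
\]
so that, for any finite $X,Y\subset\reals$ and any $k>0$,
\[
E_k(X,Y)=\sum_x r_{X-Y}(x)^k\le\sum_{j\ge 0}|T_j(X,Y)|\,2^{k(j+1)}.
\]
Throughout we have the trivial bound $|T_j(X,Y)|\le|X||Y|/2^j$, since $\sum_x r_{X-Y}(x)=|X||Y|$, and the bound $|T_j(X,Y)|=O(|f(A)+C|^2|Y|^2/(|C|\,2^{3j}))$ furnished by Lemma~\ref{lemma:main} (applied with $A,B:=X,Y$), valid whenever $X=A$.

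For \eqref{1111}, take $X=Y=A$ and $k=3$. Lemma~\ref{lemma:main} gives $|T_j(A,A)|\,2^{3j}=O(|f(A)+C|^2|A|^2/|C|)$, a bound which is \emph{independent of $j$}. Since $r_{A-A}(x)\le|A|$ for every $x$, only the $O(\log|A|)$ dyadic levels with $2^j\le|A|$ contribute. Summing the constant bound over these levels yields
\[
E_3(A)=O\bigl(|f(A)+C|^2|A|^2|C|^{-1}\log|A|\bigr),
\]
as required.

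For \eqref{1010}, take $X=A$, $Y=F$ and $k=2$. Now the two bounds on $|T_j(A,F)|\cdot 2^{2j}$, namely $|A||F|\cdot 2^j$ (trivial, increasing in $j$) and $O(|f(A)+C|^2|F|^2|C|^{-1}\cdot 2^{-j})$ (from Lemma~\ref{lemma:main}, decreasing in $j$), cross at the threshold $t^*$ satisfying
\[
(t^*)^2=\frac{|f(A)+C|^2|F|}{|C|\,|A|},\qquad\text{i.e.,}\qquad t^*=\frac{|f(A)+C||F|^{1/2}}{|C|^{1/2}|A|^{1/2}}.
\]
The strategy is to use the trivial bound for $2^j\le t^*$ and Lemma~\ref{lemma:main} for $2^j>t^*$. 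Each partial sum is geometric, dominated by its value at $j\approx\log_2 t^*$, which in both cases is $O(|A|^{1/2}|F|^{3/2}|f(A)+C|\,|C|^{-1/2})$. Adding the two contributions gives the claimed bound on $E(A,F)$.

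The main point requiring care is simply the balancing step in the second bound; both estimates are otherwise routine consequences of Lemma~\ref{lemma:main}. No new geometric input is needed beyond what the lemma already provides.
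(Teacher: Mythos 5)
Your proof is correct and takes essentially the same approach as the paper: a dyadic level-set decomposition of the energy, with each level controlled by Lemma~\ref{lemma:main}, and for \eqref{1010} a balancing argument at the same threshold $t^*=|f(A)+C|\,|F|^{1/2}(|A|\,|C|)^{-1/2}$ that the paper calls $\triangle$. The only cosmetic difference is that the paper collects all sub-threshold levels into a single term $\triangle\,E_1(A,F)=\triangle\,|A||F|$ instead of summing them dyadically as you do; the resulting computation and bound are identical.
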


\begin{proof} 
Applying Lemma \ref{lemma:main} with $B=A$, gives
\begin{align*}
E_3(A)& = \sum_{j=0}^{\lfloor\log|A|\rfloor}\sum_{\{s \;\mid\; 2^j\leq r_{A-A}(s)<2^{j+1}\}}r^3_{A-A}(s)\\
& = O\left( \sum_{j=0}^{\lfloor\log|A|\rfloor}\frac{|f(A)+C|^2|A|^2}{|C|2^{3j}}\cdot2^{3j} \right)
= O\left( \frac{|f(A)+C|^2|A|^2\log|A|}{|C|} \right) ,
\end{align*}
which proves (\ref{1111}). 

Similarly, applying Lemma \ref{lemma:main} with $B=F$ and with any fixed real parameter
$\triangle\ge 1$, gives
\begin{align*}
E(A,F)& = \sum_{\{s \;\mid\; r_{A-F}(s)<{\triangle}\}}r^2_{A-F}(s)+
\sum_{j=0}^{\lfloor\log(|A|/\triangle)\rfloor}\sum_{\{s \;\mid\; 2^j\triangle\leq r_{A-F}(s)<2^{j+1}\triangle\}}r^2_{A-F}(s) \\
& = O\left(\triangle\cdot E_{1}(A,F)+
\sum_{j=0}^{\lfloor\log(|A|/\triangle)\rfloor}\frac{|f(A)+C|^2|F|^2}{|C|2^{3j}\triangle^{3}}\cdot2^{2j}\triangle^{2} 
\right) \\
& = O\left( \triangle|A||F|+\frac{|f(A)+C|^2|F|^2}{|C|\triangle} \right) .
\end{align*}
Choosing $\triangle=\frac{|f(A)+C||F|^{1/2}}{|A|^{1/2}|C|^{1/2}}$, which is clearly $\ge 1$, yields (\ref{1010}). 
\end{proof}

\subsection*{Proof of Theorem \ref{LR}} 

First, apply H\"{o}lder's inequality to bound $E_{1.5}(U)$ from below, as follows.
$$
|U|^6=\left(\sum_{s\in{U-U}}r_{U-U}(s)\right)^3
\leq {\left(\sum_{s\in{U-U}}r^{1.5}_{U-U}(s)\right)^2|U-U|}=E_{1.5}(U)^2|U-U|.
$$
Therefore, using the above bound and Lemma \ref{E1.5} with $A=B=U$,
gives
\[
\frac{|U|^8}{|U-U|}\leq{E_{1.5}(U)^2|U|^2}\leq{E_3(U)E(U,U-U)}.
\]
Finally, apply (\ref{1111}) and (\ref{1010}), with $A=U$, $C=V$, and $F=U-U$, 
to conclude that
\[
\frac{|U|^8}{|U-U|} = O\left({|f(U)+V|^3|U-U|^{3/2}|U|^{5/2}|V|^{-3/2}\log|U|} \right),
\] 
and hence
\[
|f(U)+V|^6|U-U|^5 = \Omega\left( {\frac{|U|^{11}|V|^3}{\log^2|U|}} \right),
\] 
as required. \qed

\subsection*{Acknowledgments.}
We acknowledge, with thanks, discussions of the ideas in this paper
with Adam Sheffer, Josh Zahl and Frank de Zeeuw, the authors of \cite{SZZ13}.

\end{document}